\title{POLYCYCLIC, METABELIAN OR SOLUBLE  OF TYPE  (FP)$_{\infty}$ GROUPS WITH BOOLEAN  ALGEBRA OF RATIONAL SETS AND BIAUTOMATIC SOLUBLE GROUPS ARE VIRTUALLY ABELIAN }
\author{VITALY ROMAN'KOV\\ Omsk State Dostoevsky University and Omsk State Technical University,\\ 644077, Omsk, Russia, e-mail:romankov48@mail.ru}
\date{}
\newtheorem{theorem}{Theorem}[section]
\newtheorem{lemma}[theorem]{Lemma}
\theoremstyle{definition}
\newtheorem{conjecture}[theorem]{Conjecture}
\newtheorem{definition}[theorem]{Definition}
\newcounter{comcount}
\begin{document}

\maketitle

\begin{abstract}
Let $G$ be  a polycyclic, metabelian  or soluble  of type (FP)$_{\infty}$  group  such that the class $Rat(G)$ of all rational subsets of $G$ is a boolean algebra. Then $G$  is virtually abelian. Every soluble biautomatic group is virtually abelian. 
\end{abstract}


\section{Introduction}
\label{se:intro}
The topic of this paper is two important concepts: rational sets and biautomatic structure. We study finitely generated soluble groups $G$  such that the class $Rat(G)$ of all rational subsets of $G$ is a boolean algebra. We conjecture that every such group is virtually abelian. Note that every finitely generated virtually abelian group satisfies to this property. We confirm this conjecture in the case where $G$ is a polycyclic, metabelian or soluble group of type FP$_{\infty}$. This conjecture remains  open in general case. It appeared that the notion  FP$_{\infty}$ helps to prove by the way that every soluble biautomatic group is virtually abelian. Thus, we give answer to known question posed in \cite{ECHLPT}.

We provide full proofs of four theorems attributed to Bazhenova, a former student of the author, stating that some natural assumptions imply that a soluble group  is virtually abelian.  The original proofs was given by her in collaboration with the author more than 14 years ago and had never been published. Now we fill this gap by presenting improved versions of these proofs. 

An excellent introduction to rational sets is \cite{Gil}, where the reader can find out basic definitions and fundamental results in this area.

Much of the basic theory of automatic and biautomatic groups is presented by Epstein and al. in \cite{ECHLPT}. One of the major open questions in group theory is whether or not an automatic group is necessarily biautomatic. The answer is not known even in the class of soluble groups. Note that  some known results about biautomatic groups remain open questions for automatic groups.Gersten and Short initiated in \cite{GS} the study of the subgroup structure of biautomatic groups. Among other results they established that a polycyclic subgroup of a biautomatic group is virtually abelian. Also they proved that if a linear group is biautomatic, then every soluble subgroup is (finitely generated) virtually abelian. 

\section{Preliminaries}
\label{se:prelim} 

Given an alphabet $\Sigma $,  recall that a {\it regular language} $R$ is a certain subset of the free monoid $\Sigma^{\ast}$ generated by $\Sigma ,$ which is empty or can be obtained by taking singleton subsets of $\Sigma $, and perform, in a finite number of steps, any of the three basic ({\it rational}) operations: taking union, string concatenation (product), and the Kleene star (generating of submonoid).

The construction of a set like $R$ is still possible when $\Sigma^{\ast}$ is changed by any monoid $M.$ Let $S$ be the set of all singleton subsets of $M.$ Consider the closure $Rat(M)$ of $S$ under the rational operations of union, product, and the formation of a submonoid of $M.$ In other words, $Rat(M)$ is the smallest subset of $M$ such that 

\begin{itemize}
\item $\emptyset \in Rat(M)$,
\item $A, B \in Rat(M) $ imply $A\cup B \in Rat(M),$
\item $A,B \in Rat(M)$ imply $AB \in Rat(M),$ where $AB =\{ab| a\in A, b \in B\}$,
\item $A \in  Rat(M)$ implies $A^{\ast} \in Rat(M),$ where $A^{\ast}$ is the submonoid of $M$ generated by $A.$ 
\end{itemize}

A {\it rational set} of $M$ is an element of $Rat(M).$ 

If  $\Sigma^{\ast}$ is a finite generated free monoid, then a rational set of $\Sigma^{\ast}$
is also called  {\it regular language}.

The rational sets of a monoid $M$ are precisely the subsets accepted by finite automata over $M.$ A {\it finite automaton} $\Gamma $ over $M$ is a finite directed graph with a distinguished initial vertex, some distinguished terminal vertices, and with edges labelled by elements from $M.$ The set {\it accepted} by $\Gamma $ is the collection of labels of paths from the initial vertex to a terminal vertex, where {\it label $\mu (p)$ of a  path} $p$ is  the product of labels of sequential edges in $p$ 

Recall two auxillary assertions. 

\begin{lemma}(The Pumping Lemma (see \cite{Gil}).
\label{le:1}
Let $M$ be a monoid, $R\in Rat(M).$ Then either $R$ is finite, or it contains a set of the form $aq^{\ast}b = \{aq^nb| n \geq 0\}, a,q,b \in M, q \not= 1.$ Moreover, if $M$ is a group, then the subset $aq^{\ast}b$ of $M$  can be written in the form $abb^{-1}q^{\ast}b = ab(b^{-1}qb)^{\ast}.$ So we might assume $b=1.$ 
\end{lemma}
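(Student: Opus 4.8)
The plan is to lean entirely on the characterization of rational sets recalled just above the statement: $R \in Rat(M)$ means $R$ is accepted by some finite automaton $\Gamma$ over $M$. Fix such a $\Gamma$ with vertex set $V$, and assume $R$ is infinite; the goal is to exhibit a subset of $R$ of the form $aq^{\ast}b$ with $q \neq 1$. First I would note that for each $k$ there are only finitely many paths in $\Gamma$ using at most $k$ edges (the graph is finite), hence only finitely many labels of such paths. Since $R$ is infinite, for every $k$ there is an accepting path with more than $k$ edges; taking $k = |V|$, any such path must revisit a vertex and therefore decomposes as $p_1 c p_2$, where $p_1$ runs from the initial vertex to some vertex $v$, $c$ is a non-empty closed path at $v$, and $p_2$ runs from $v$ to a terminal vertex. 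Setting $a = \mu(p_1)$, $q = \mu(c)$, $b = \mu(p_2)$, each path $p_1 c^n p_2$ is accepting, so $\{aq^nb : n \geq 0\} \subseteq R$.

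The one real subtlety — and the step I expect to be the crux — is that the label $q = \mu(c)$ of a non-empty cycle might still be trivial in $M$, in which case $aq^{\ast}b = \{ab\}$ is useless. To rule this out I would argue by contradiction: suppose every cycle occurring inside an accepting path had trivial label. Then, given any $w \in R$ realized by an accepting path $p$, one could repeatedly excise from $p$ the closed sub-path lying between two occurrences of a repeated vertex — this leaves the label unchanged, since that sub-path is a cycle with trivial label — until the resulting accepting path has no repeated vertex and hence at most $|V| - 1$ edges. There are only finitely many such paths, so $R$ would be finite, contrary to assumption. Therefore, when $R$ is infinite, some accepting path contains a cycle $c$ with $\mu(c) = q \neq 1$, and the decomposition above applied to that path gives $aq^{\ast}b \subseteq R$ with $q \neq 1$.

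For the final assertion I would simply compute in the group $M$: for every $n \geq 0$ one has $aq^nb = (ab)(b^{-1}qb)^n$, whence $aq^{\ast}b = ab\,(b^{-1}qb)^{\ast}$; since conjugation by $b$ is injective, $b^{-1}qb \neq 1$, and replacing the triple $(a,q,b)$ by $(ab,\; b^{-1}qb,\; 1)$ puts the set in the stated form with $b = 1$. Apart from the pigeonhole-and-excision argument guaranteeing a non-trivial cycle label, every step here is routine automata-theoretic bookkeeping.
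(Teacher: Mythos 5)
Your proof is correct: the reduction to a finite automaton over $M$, the pigeonhole decomposition $p_1cp_2$, and in particular the excision argument showing that if every cycle inside an accepting path had trivial label then every element of $R$ would be the label of a repetition-free accepting path (so $R$ would be finite) together give exactly the claimed $aq^{\ast}b\subseteq R$ with $q\neq 1$, and the group-case reduction to $b=1$ is a routine conjugation. The paper itself offers no proof of this lemma, merely citing \cite{Gil}; your argument is the standard automaton-theoretic pumping proof that the citation stands for, so there is nothing to object to.
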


\begin{lemma} (\cite{B}).
\label{le:2}
Let $G$ be a group, $H\leq G$ be a subgroup. If $R \subseteq H$ is  rational subset of $G$, then $R$ is a rational subset of $H.$   
\end{lemma}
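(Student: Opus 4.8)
\emph{Proposed proof.} The plan is to turn a finite automaton over $G$ that accepts $R$ into a finite automaton over $H$ accepting the same set, by absorbing the right-coset structure of $H$ in $G$ into the edge labels. Since the rational subsets of a monoid are exactly the subsets accepted by finite automata over it, I start from a finite automaton $\Gamma$ over $G$, with state set $Q$, initial state $q_0$ and terminal set $T$, accepting $R$. First I would pass to a trim automaton, deleting every state that does not lie on some path from $q_0$ to $T$; this changes neither the accepted set nor the fact that all labels lie in $G$. For a state $q$ write $L_{\mathrm{in}}(q)$ (resp.\ $L_{\mathrm{out}}(q)$) for the set of labels of paths $q_0\to q$ (resp.\ $q\to t$ with $t\in T$); trimness makes both nonempty, so I may fix $a_q\in L_{\mathrm{in}}(q)$ with $a_{q_0}=1$.

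The crucial step uses $R\subseteq H$. If $g\in L_{\mathrm{in}}(q)$ and $h$ is any element of $L_{\mathrm{out}}(q)$, then $gh$ and $a_q h$ are both labels of accepting paths, hence lie in $R\subseteq H$, so $g a_q^{-1}=(gh)(a_q h)^{-1}\in H$. Therefore $L_{\mathrm{in}}(q)\subseteq Ha_q$ for every state $q$; in particular $a_t\in R\subseteq H$ for each terminal $t$. Applying this inclusion to an edge $p\xrightarrow{\ell}q$ with label $\ell\in G$ gives $a_p\ell\in L_{\mathrm{in}}(q)\subseteq Ha_q$, whence $a_p\,\ell\,a_q^{-1}\in H$.

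I then construct $\Gamma'$: keep the states of $\Gamma$ and the initial state $q_0$; relabel each edge $p\xrightarrow{\ell}q$ by $a_p\,\ell\,a_q^{-1}\in H$; add one new state $f$ and, for every $t\in T$, an edge $t\to f$ labelled $a_t\in H$; take $\{f\}$ as the terminal set. A telescoping computation shows that the label in $\Gamma'$ of a path $q_0=p_0\xrightarrow{\ell_1}\cdots\xrightarrow{\ell_n}p_n=t$ followed by the edge $t\to f$ equals $a_{q_0}(\ell_1\cdots\ell_n)a_t^{-1}a_t=\ell_1\cdots\ell_n$, i.e.\ the label of the corresponding accepting path of $\Gamma$; and conversely every accepting path of $\Gamma$ extends uniquely to an accepting path of $\Gamma'$ with the same label. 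Hence $\Gamma'$ is a finite automaton over $H$ whose accepted set is exactly $R$, so $R\in Rat(H)$.

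The only real obstacle is bookkeeping: justifying the reduction to a trim automaton, handling the correction edges at the terminal states (including the degenerate cases $R=\emptyset$ and $q_0\in T$, which cause no trouble, since in the latter case the edge $q_0\to f$ already carries the label $a_{q_0}=1$), and checking that $\Gamma'$ accepts nothing outside $R$. Conceptually there is essentially a single idea: a finite automaton over $G$ whose accepted set lies inside $H$ is forced to respect the partition of $G$ into right cosets of $H$ — each state $q$ "lives" in the single coset $Ha_q$ — and this is exactly what permits the coset shifts to be moved onto the edge labels.
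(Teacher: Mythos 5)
Your proof is correct: the trimming step, the observation that $L_{\mathrm{in}}(q)\subseteq Ha_q$ forced by $R\subseteq H$, and the relabelling $\ell\mapsto a_p\,\ell\,a_q^{-1}$ together with the correction edges $t\to f$ labelled $a_t\in R\subseteq H$ do yield a finite automaton over $H$ accepting exactly $R$. The paper itself gives no argument for this lemma (it simply cites Bazhenova's paper), and your coset-normalization of a trim automaton is essentially the standard proof found in that reference, so there is nothing to add.
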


As we have \ref{le:2}, we don't have to care if a set is rational as a subset of a larger group or of a smaller one. Note that with monoids which are not groups the situation may be different. 

\begin{lemma}(see \cite{Gil}).
\label{le:3}
Let $G$ be a group and $H \leq G.$ Then $H \in Rat(G)$ if and only if $H$ is finitely generated. If $R \in Rat(G)$ then gp$(R)\in Rat(G)$, and so finitely generated.
\end{lemma}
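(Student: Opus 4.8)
\medskip
\noindent\textbf{Proof proposal.}
The plan is to prove the equivalence ``$H\in Rat(G)\iff H$ is finitely generated'' — this is the theorem of Anissimov and Seifert — and then to deduce the assertion about $\mathrm{gp}(R)$ from it. The implication ``finitely generated $\Rightarrow$ rational'' is immediate: if $H=\langle h_1,\dots ,h_n\rangle$, put $A=\{1,h_1^{\pm1},\dots ,h_n^{\pm1}\}$, a finite, hence rational, subset of $G$; as $A$ is symmetric and contains $1$, the submonoid $A^{\ast}$ it generates is exactly $H$, so $H=A^{\ast}\in Rat(G)$ by the last defining closure property of $Rat(G)$.

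For the converse, suppose $H\in Rat(G)$, so $H$ is the set of labels of paths from the initial vertex $q_0$ to a terminal vertex of a finite automaton $\Gamma$ over $G$. I would first pass to the \emph{trim} part of $\Gamma$, deleting every vertex not reachable from $q_0$ and every vertex from which no terminal vertex can be reached; this leaves $H$ unchanged (and $H\neq\emptyset$ since $1\in H$). For each surviving vertex $q$ fix a path $\pi_q$ from $q_0$ to $q$, taking $\pi_{q_0}$ to be the empty path, and write $\alpha_q\in G$ for its label. For an edge $e\colon q\to q'$ with label $g_e$ put $s_e=\alpha_q g_e\alpha_{q'}^{-1}$; concatenating $\pi_q$, $e$ and a path from $q'$ to a terminal vertex, and comparing with the analogous extension of $\pi_{q'}$, shows $s_e\in H$, and likewise $\alpha_t\in H$ for every terminal vertex $t$. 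Let $S$ be the finite set of all the $s_e$ together with all such $\alpha_t$. Given $h\in H$, realised as the label $g_{e_1}\cdots g_{e_m}$ of a path $q_0=p_0\to p_1\to\cdots\to p_m=t$ ending at a terminal vertex $t$, the product $s_{e_1}s_{e_2}\cdots s_{e_m}$ telescopes to $\alpha_{p_0}g_{e_1}\cdots g_{e_m}\alpha_{p_m}^{-1}=h\alpha_t^{-1}$, so $h=s_{e_1}\cdots s_{e_m}\alpha_t\in\langle S\rangle$. Hence $H=\langle S\rangle$ is finitely generated.

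For the last assertion, I would use that the inversion map $g\mapsto g^{-1}$ is an anti-automorphism of $G$ under which $Rat(G)$ is stable: it fixes $\emptyset$ and singletons, commutes with unions and with the star operation, and carries a product $AB$ to $B^{-1}A^{-1}$, again a product of rational sets. Hence $R\in Rat(G)$ implies $R^{-1}\in Rat(G)$, so $B:=R\cup R^{-1}\cup\{1\}\in Rat(G)$ and therefore $B^{\ast}\in Rat(G)$; since $B$ is symmetric and contains $1$, we have $B^{\ast}=\mathrm{gp}(R)$, so $\mathrm{gp}(R)\in Rat(G)$, and by the first part it is finitely generated.

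The only genuinely delicate step is the converse implication in the first part: everything there rests on first replacing $\Gamma$ by a trim automaton, so that the base paths $\pi_q$ are available and the telescoped product can be corrected by the term $\alpha_t$ contributed by the terminal vertex; the rest is bookkeeping.
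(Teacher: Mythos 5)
Your proof is correct, and since the paper gives no argument for this lemma (it is quoted from \cite{Gil}), the right comparison is with the standard Anissimov--Seifert proof there, which yours matches in all essentials: a finite symmetric generating set shows finitely generated implies rational; the trimmed automaton with fixed base paths $\pi_q$ and the telescoping product $s_{e_1}\cdots s_{e_m}\alpha_t$ shows rational implies finitely generated; and closure of $Rat(G)$ under inversion gives $\mathrm{gp}(R)=(R\cup R^{-1}\cup\{1\})^{\ast}\in Rat(G)$, whence finite generation by the first part. The only cosmetic slip is the phrase that inversion ``fixes'' singletons --- it sends $\{g\}$ to $\{g^{-1}\}$, which is again a singleton, so the induction goes through unchanged.
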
 

\begin{lemma}(see \cite{B}).
\label{le:4}
Let $G$ be a group, $T \lhd G,$ and $\varphi : G \rightarrow G/T$ is the standard homomorphism. Then for every $R \in Rat(G)$ we have $\varphi (R) \in Rat(G/T).$ If $T$ is finitely generated then for every $S \in Rat(G/T)$ we have $\varphi^{-1}(S) \in Rat(G).$  
\end{lemma}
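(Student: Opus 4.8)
The plan is to treat the two assertions separately: the first is a soft ``a monoid homomorphism carries rational sets to rational sets'' statement, while the second is where finite generation of $T$ really enters, through Lemma \ref{le:3}.

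For the first assertion I would argue by induction on the construction of $R$ as a rational set, i.e. show that the family $\mathcal F = \{R \in Rat(G) : \varphi(R) \in Rat(G/T)\}$ enjoys the defining closure properties of $Rat(G)$ and hence coincides with it. Since $\varphi$ is a monoid homomorphism we have $\varphi(\emptyset)=\emptyset$, $\varphi(\{g\})=\{\varphi(g)\}$, $\varphi(A\cup B)=\varphi(A)\cup\varphi(B)$, $\varphi(AB)=\varphi(A)\varphi(B)$, and $\varphi(A^{\ast})=\varphi(A)^{\ast}$ (the image of the submonoid generated by $A$ is the submonoid generated by $\varphi(A)$); so $\mathcal F$ contains $\emptyset$ and all singletons and is closed under union, product and Kleene star, whence $\mathcal F = Rat(G)$. (Surjectivity of $\varphi$ is not needed here, only that it is a homomorphism.)

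For the second assertion I would use the automaton description of rational sets recalled above. Write $S\in Rat(G/T)$ as the set accepted by a finite automaton $\Gamma$ over $G/T$, and choose, for each of its finitely many edges, a preimage in $G$ of the label; this produces a finite automaton $\Gamma'$ over $G$ whose accepted set $S'$ lies in $Rat(G)$ and satisfies $\varphi(S') = S$, because the path-to-path correspondence between $\Gamma'$ and $\Gamma$ is a bijection under which $\varphi$ carries the label of each path of $\Gamma'$ to the label of the corresponding path of $\Gamma$. The key observation is then that, $T$ being normal, each fibre $\varphi^{-1}(\varphi(s'))$ with $s'\in S'$ is exactly the coset $s'T$, so $\varphi^{-1}(S) = S'T$. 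Finally $T\in Rat(G)$ by Lemma \ref{le:3}, since $T$ is finitely generated, and $Rat(G)$ is closed under products, so $\varphi^{-1}(S) = S'T \in Rat(G)$.

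The only genuine content is the identity $\varphi^{-1}(S) = S'T$ combined with the use of Lemma \ref{le:3}; the step to watch is the lifting, where one must check that choosing preimages edge by edge yields $\varphi(S')=S$ exactly rather than merely $\varphi(S')\subseteq S$ — this is precisely the bijectivity of the path correspondence. As an alternative that does not invoke Lemma \ref{le:3}, one could instead build an explicit automaton for $\varphi^{-1}(S)$ by attaching to every vertex of $\Gamma'$ a loop labelled by each of the finitely many generators of $T$ and their inverses, and use normality of $T$ to push all such loop contributions to the end of a path; but the product-closure argument is cleaner.
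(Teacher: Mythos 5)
Your proof is correct and complete. Note that the paper itself does not prove Lemma \ref{le:4}; it is quoted from Bazhenova \cite{B}, and your argument is exactly the standard one that result rests on: the forward direction by checking that $\varphi$ respects the three rational operations (in particular $\varphi(A^{\ast})=\varphi(A)^{\ast}$), and the preimage direction by lifting an automaton for $S$ edge by edge to get $S'\in Rat(G)$ with $\varphi(S')=S$, observing $\varphi^{-1}(S)=S'T$, and invoking Lemma \ref{le:3} to get $T\in Rat(G)$ together with closure of $Rat(G)$ under products. The only cosmetic remark is that normality of $T$ plays no special role in the identity $\varphi^{-1}(S)=S'T$ beyond making $G/T$ a group and $\varphi$ a homomorphism with fibres the cosets of $T$; the genuinely essential hypothesis is finite generation of $T$, which you correctly isolate as entering only through Lemma \ref{le:3}.
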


\section{Polycyclic groups}   

\bigskip
The  goal of this section   is to prove this:

\begin{theorem}
\label{th:pol}
Let $G$ be a polycyclic group. If  $Rat(G)$ is a boolean algebra, then $G$ is virtually abelian.
\end{theorem}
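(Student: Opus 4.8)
The plan is to argue by contradiction: suppose $G$ is polycyclic, $Rat(G)$ is a boolean algebra, but $G$ is not virtually abelian. By the structure theory of polycyclic groups, a polycyclic group that is not virtually abelian must contain a subgroup that "witnesses" non-trivial action — concretely, there is a finitely generated subgroup $H \le G$ and a normal abelian subgroup $A \lhd H$ such that $H/A$ is infinite cyclic, $H = A \rtimes \langle t \rangle$, and the conjugation action of $t$ on $A \otimes \mathbf{Q}$ has an eigenvalue that is not a root of unity (otherwise, passing to a finite-index subgroup, $t$ would act unipotently and, combined with polycyclicity along a full series, one gets virtual nilpotence, and a non-virtually-abelian polycyclic group still forces a genuinely non-unipotent piece — this reduction needs care but is standard). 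By Lemma \ref{le:2}, $Rat(H)$ is also a boolean algebra, so it suffices to derive a contradiction inside such an $H$.

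Inside $H = A \rtimes \langle t \rangle$ I would exploit the interplay between the Pumping Lemma (Lemma \ref{le:1}) and the boolean-algebra hypothesis. The standard trick: take two elements $q_1, q_2 \in A$ on which $t$ acts "expandingly" (using the eigenvalue not a root of unity, one produces elements whose iterated conjugates $t^{-n} q_i t^n$ grow), and consider rational sets such as $R_i = (t^{-1})^{\ast} q_i t^{\ast}$ or more precisely sets of the form $\{ t^{-n} a_i t^n : n \ge 0\}$ obtained as images/intersections of rational sets. The key point is that the complement or intersection of such sets, which must again be rational by the boolean hypothesis, must by the Pumping Lemma contain a set of the form $a q^{\ast}$ with $q \ne 1$; but one can arrange the arithmetic so that the resulting set inside $A$ is a set of lattice points lying along a curve (because of the eigenvalue, the coordinates grow geometrically, not linearly), which cannot contain an arithmetic progression $a q^{\ast} = \{a, aq, aq^2, \dots\}$ — a contradiction. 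Lemmas \ref{le:3} and \ref{le:4} are used to pass between $H$, its abelian subgroups, and quotients, so that one may assume $A$ is finitely generated free abelian and work with explicit integer matrices.

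More concretely, I expect the cleanest route is: reduce to $A = \mathbf{Z}^k$ and $t$ acting by a matrix $M \in GL_k(\mathbf{Z})$ with spectral radius $> 1$; pick a vector $v$ with $\|M^n v\| \to \infty$ geometrically; form the rational subset $S = \{ M^n v : n \ge 0 \} = v^{\ast}$-type set pushed around by $t$; intersect two translates or use the boolean operations to carve out a set that, on one hand, must be rational, and on the other hand is forced by the Pumping Lemma to contain an infinite set $aq^{\ast}$ lying inside $A$ — i.e.\ an infinite arithmetic progression of vectors. Then observe an infinite arithmetic progression of lattice vectors is "affine" (linear growth), while the set we constructed grows geometrically, so it can meet any affine progression in only finitely many points: contradiction. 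The main obstacle, and where I would spend the most care, is the \emph{reduction step}: showing that a polycyclic group which is not virtually abelian genuinely contains a metabelian subgroup $H = A \rtimes \mathbf{Z}$ with non-quasi-unipotent action — one must handle the case where all abelian-by-cyclic subgroups act with eigenvalues that are roots of unity (forcing, via a Lie–Kolchin / unipotence argument up the polycyclic series, virtual nilpotence, and then virtual abelianness would fail only if there is genuine non-commutative nilpotent behaviour, which must itself be excluded or handled by a separate Heisenberg-type rational-set argument). Getting a single clean "bad" configuration that the Pumping Lemma kills is the crux; once that configuration is isolated, the contradiction is short.
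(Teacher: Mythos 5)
Your overall frame (reduce to an abelian-by-cyclic subgroup $H=A\rtimes\langle t\rangle$ and play the Pumping Lemma against boolean-algebra closure inside $A\simeq\mathbb{Z}^k$) is in the same spirit as the paper, which also works first in the special case $A\rtimes\langle h\rangle$ and obtains rational sets like $(h^{-1})^{\ast}xh^{\ast}\cap A$ whose pumping is exploited. But there is a genuine gap exactly where you yourself flag it, and it is not a technicality: your contradiction only has force in the non-quasi-unipotent case. The reduction you propose --- ``a polycyclic group that is not virtually abelian contains $A\rtimes\mathbb{Z}$ with an eigenvalue that is not a root of unity'' --- is false precisely for virtually nilpotent groups such as $UT_3(\mathbb{Z})$, and for those your geometric-growth-versus-arithmetic-progression argument collapses: in the Heisenberg group one has $t^{-n}at^n=az^n$, so the conjugate-orbit set is literally of the form $a q^{\ast}$ (linear growth), it happily contains infinite arithmetic progressions, and the Pumping Lemma produces no contradiction. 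You acknowledge this case must be ``excluded or handled by a separate Heisenberg-type rational-set argument'' but do not supply one, so the proof is incomplete for the virtually nilpotent, non-virtually-abelian polycyclic groups --- which is the essential residual case.

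The paper avoids this dichotomy entirely by a growth-independent construction: having found one $g\in A$ commuting with some $h^u$, it sets $w=h^u$ and forms $R=(wg)^{\ast}f^{\ast}\cap w^{\ast}(gf)^{\ast}=\{w^ng^nf^n\mid n\ge 0\}$ and $S=R\bigl(g^{\ast}\cup (g^{-1})^{\ast}\bigr)\cap w^{\ast}f^{\ast}=\{w^nf^n\mid n\ge 0\}$; pumping $S$ forces $f$ to commute with a power of $h$ for \emph{every} $f\in A$, with no hypothesis on eigenvalues, and in $UT_3(\mathbb{Z})$ this same computation directly shows $Rat$ fails to be a boolean algebra. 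It then finishes by induction on derived length rather than by contradiction. If you want to salvage your route, you must either add such a unipotent-case argument (your expanding-orbit trick cannot be patched to cover it, since the relevant orbits there are arithmetic progressions) or quote the result that $Rat(UT_3(\mathbb{Z}))$ is not a boolean algebra and show every virtually nilpotent, non-virtually-abelian polycyclic group contains (a finite extension of) a Heisenberg-type subgroup to which it applies; also note that your first limit/contradiction step would still need the care you mention to choose $v$ with a genuinely expanding component.
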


\begin{proof}
 Let us first consider a special case, that is the 'core' of the problem, in the sense concentrates all the difficult points of it.  So, let a group $G$ is a semidirect product $A 
\lambda H$, where $A \simeq \mathbb{Z}^r$ is a normal free abelian of rank $r$ subgroup of $G$, $H=$ gp$(h)$ is a cyclic subgroup of $G.$ Suppose that $Rat(G)$ is a boolean algebra. We are to prove that $G$ is virtually abelian. 

We may assume  $h$ to have infinite order.  First prove that some nontrivial element $g \in A$ and some exponent $h^m, m> 0,$ commute. Take an arbitrary nontrivial element $x \in A.$ Consider the set

$$R = \{h^{-n}xh^n | n \in \mathbb{Z}\}.$$

If $R$ is finite, then we have $h^{-n}xh^n =  h^{-t}xh^t$ for some $n > t$, which implies $[x, h^{n-t}] = 1$, hence we get what we need. Now assume that $R$ is infinite. Note that $R$ is rational, because it is equal to $(h^{-1})^{\ast}xh^{\ast} \cap A.$ Then by lemma \ref{le:1}, $R$ contains a subset $P = aq^{\ast}, a, q \in A, q \not= 1.$ Let $I$ be the set of indices such that $P = \{h^{-i}xh^i| i\in I\}$. Then $S= \{h^i| i \in I\} = h^{\ast} \cap (x^{-1}h^{\ast}P)$ is  infinite rational set. So it contains a subset $T = h^k(h^l)^{\ast}, k, l \in \mathbb{Z}, l > 0.$ The set $Q= \{f^{-1}xf| f \in T\} = (T^{-1}xT) \cap A$ is rational and subset of $P.$ We can assume that $k = 0.$ In other case we change 
$R, P, a$ and $q$ to $h^{-k}Rh^{k}, h^{-k}Ph^{k}, h^{-k}ah^{k}$ and $h^{-k}qh^{k},$ respectively. Now $T = (h^l)^{\ast}.$

Since $A$ is isomorphic to the free abelian group of rank $r$, we may regard it as a lattice of $\mathbb{R}^r.$ Let $| \cdot |$ be any standard norm on $\mathbb{R}^r.$ Since now we will use additive notation for the operation on $A$ as well as multiplicative one. 

Take an arbitrary real $\varepsilon > 0.$ Pick a positive integer $m$ such that $aq^m$ (or, in additive notation, $a + mq$ ) belongs to $Q$ and the inequality 

$$1/m|(h^{-l}ah^l) - a| < \varepsilon $$

\noindent
holds. This is possible, because $Q$ is infinite. Since $aq^m \in Q$, the element $h^{-l}(aq^m)h^l$ is in $Q$, so it can be written in the form $aq^p, p \in \mathbb{Z}, p \geq 0.$ Then 

$$h^{-l}ah^l + m(h^{-l}qh^l) = a + pq,$$
$$m(h^{-l}qh^l)- pq = a- h^{-l}ah^l,$$
$$
|h^{-l}qh^l- (p/m)q|= (1/m)|a- h^{-l}ah^l| < \varepsilon .$$  

It follows that $h^{-l}qh^l$ is a limit of elements of the form $sq, s \in \mathbb{R}, s \geq 0,$ so it has this form too. Clearly, $s$ is nonzero and rational, because $sq \in \mathbb{Z}^r \setminus \{0\}.$ Let $u$ be the greatest positive integer with the property: $q$ has the form $sq', q' \in A.$   Then $v = su$ is the greatest positive integer with the property: $sq$ has the form $vq', q' \in A.$ Since there exists 
 an automorphism of $A$ which takes $q$ to $sq,$ we get $u = sm$, so $s = 1.$ Then we have $h^{-l}qh^{l} = q,$ so we get what we need. 

Let us finish the proof. Let $f \in A, f \not= 1.$ Let $g, h^u$ ($g \not= 1, g \in A, u > 0$) commute. If $r,s$ are nonzero integers such that $g^r = f^s,$ then $(h^{-u}fh^u)^{rs}=
(h^{-u}f^sh^u)^{r} = g^{r^2}= f ^{rs}.$ As $A$ is abelian and torsion-free, $h^{-u}fh^u = f,$ so $f$ and $h^u$ commute. 

Now suppose that $g^r=f^s$ cannot hold unless $r = s =0,$ or, equivalently, $g^rf^s = g^nf^l$ cannot hold unless $r=n, s = l.$ Let $w = h^u.$ Consider the set 

$$R = (wg)^{\ast}f^{\ast} \cap w^{\ast}(gf)^{\ast} = \{w^ng^nf^n | n \geq 0\}.$$ 

Let

$$S = R(g^{\ast}\cup (g^{-1})^{\ast}) \cap w^{\ast}f^{\ast} =$$
$$\{w^nf^n| n \geq 0\}.$$

Take a subset $aq^{\ast} \subseteq S, q \not= 1.$ Let $a = w^nf^n, aq = w^mf^m, n \not= m.$ Then

$$aq^2 =(aq)a^{-1}(aq) = w^{2m-n}(w^{n-m}f^{m-n}w^{m-n})f^m$$

\noindent
also has the form $w^tf^t, t \geq 0.$ Then $t = 2m-n.$ Hence

$$f^{2m-n} = f^m(w^{n-m}f^{m-n}w^{m-n}),$$

\noindent
so

$$f^{m-n}= w^{n-m}f^{m-n}w^{m-n},$$

\noindent
then

$$f = w^{n-m}fw^{m-n}.$$

Then $f$ and $h^{m-n}$ commute. 

We proved  that for each $f \in A$ there exists $t \in \mathbb{Z}, t > 0,$ such that $f$ and $h^t$ commute.  Let $f_1, ..., f_r$ be free generators for $A.$ Pick some nonzero $t_1, ..., t_r$ such that for each $i$ the elements $f_i$ and $h^{t_i}$ commute. Let $N =  t_1 ... t_r.$ Then $h^N$ and every $f_i$ commute. Hence the group $M$ generated by $A$ and $h^N$ is abelian. Clearly, $M$ has finite index in $G.$ Thus $G$ is virtually abelian. 

Now we are ready to prove the theorem by using induction on solubility length. If $G$ is abelian the statement is trivial. Let $G$ be nonabelian.The derived subgroup  $G'$ has smaller solubility length, and, as it is finitely generated, its rational subsets are a boolean algebra.  Then $G'$ is virtually abelian. Let $G = $ gp$(G', g_1, ..., g_j)$. Consider the series of subgroups $G' = G_0 \leq G_1 \leq G_j = G$, where $G_{i+1} = $ gp$(G_i, g_{i+1})$ for $i = 0, ..., j-1.$ Clearly, all $G_i$ are normal in $G.$ Prove by induction on $i$ that all $G_i$ are virtually abelian. Suppose that $H_i$ is a finite index normal abelian subgroup of $G_i.$  We have proved above that gp$(H_i, g_{i+1}$ is virtually abelian. Hence $H_{i+1}$ is virtually abelian. 

\end{proof}

\section{Metabelian groups}   

\bigskip
 Recall that a group $G$ is said to have the {\it Howson property} (or to be a {\it Howson group}) if the intersection $H \cap K$ of any two finitely generated subgroups $H, K$ of $G$ is finitely generated subgroup. Let  $G$ be a group in which   $Rat(G)$ is a boolean algebra. Then  $G$ has the Howson property. Indeed, a subgroup of arbitrary group is a rational set if and only if it is finitely generated.  By our assumption the intersection $H\cap K$ is  rational set. Hence, $H \cap K$ is finitely generated subgroup. 

All finitely generated metabelian nonpolycyclic Howson groups are characterized as follows.

\begin{theorem}( Kirkinskij  \cite{Kir}).
\label{th:12}
Let  $G$ be a finitely generated metabelian nonpolycyclic group.  Then the following properies are equivalent: 
\begin{enumerate}
\item $G$ has the Howson property,
\item the finitely generated nonpolycyclic subgroups of $G$ have finite indexes,
\item $G$  has a subgroup $H$ of finite index containing a normal finite subgroup $T$ such that $H/T \simeq $ gp$(x, a| [a,a^{x^i}] =1, i \in \mathbb{Z}, a^{f(x)} =1)$ with $f(x)$ being irreducible over $\mathbb{Z}$ polynomial with integral coefficients such that deg$f(x)\geq 1 $ and for every $n \in \mathbb{N}$ this polynomial does not divide any polynomial in $x^n$ of degree deg$f(x)-1.$ If $f(x) = q_0x^m + q_1x^{m-1} + ... + q_m,$ then $a^{f(x)}$ means $(a^{x^m})^{q_0}(a^{x^{m-1}})^{q_1} ... a^{q_m}.$
\end{enumerate}
\end{theorem}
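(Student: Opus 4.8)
\noindent\textit{Proof proposal.} The right framework is the module picture. Choose a maximal abelian normal subgroup $A$ of $G$; then $C_G(A)=A$, the quotient $Q=G/A$ is a finitely generated abelian group acting faithfully on $A$, and by P.~Hall's theorem $A$ is a Noetherian $\mathbb{Z}Q$-module, so that $G$ satisfies the maximal condition on normal subgroups and is residually finite. Note that $G$ is polycyclic exactly when $A$ is finitely generated as an abelian group, and that $Q$ is infinite whenever $G$ is non-polycyclic. The plan is to prove the equivalences in the cyclic order $(2)\Rightarrow(1)\Rightarrow(3)\Rightarrow(2)$.

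The implication $(2)\Rightarrow(1)$ is immediate: if $K_1,K_2$ are finitely generated subgroups and one of them, say $K_1$, is non-polycyclic, then $K_1$ has finite index and hence $K_1\cap K_2$ has finite index in the finitely generated group $K_2$; if both $K_1,K_2$ are polycyclic, so is $K_1\cap K_2$. For $(3)\Rightarrow(2)$ I would first observe that property $(2)$ passes in both directions between a group and its subgroups of finite index and survives quotienting by a finite normal subgroup, which reduces the claim to $G=\Gamma_f:=\gp{x,a\mid[a,a^{x^i}]=1\ (i\in\mathbb{Z}),\ a^{f(x)}=1}$. In $\Gamma_f$ the normal closure $A$ of $a$ is the cyclic module $\mathbb{Z}[x^{\pm1}]/(f(x))$, which — as $f$ is irreducible — is the subring $\mathbb{Z}[\alpha,\alpha^{-1}]$ of the number field $K=\mathbb{Q}[x]/(f)$, $\alpha$ being the image of $x$; the arithmetic hypothesis on $f$ says precisely that $\mathbb{Q}(\alpha^n)=\mathbb{Q}(\alpha)$ for every $n\ge1$. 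Given a finitely generated $N\le\Gamma_f$, let $m\mathbb{Z}$ be its image in $Q=\mathbb{Z}$ and $M=N\cap A$. If $m=0$ then $N\le A$ is a finitely generated abelian group, hence polycyclic; if $m\ge1$, then $M$ is a $\mathbb{Z}[x^{\pm m}]$-submodule of $A$, and since $A\otimes\mathbb{Q}=K$ is one-dimensional over $\mathbb{Q}(\alpha^m)=K$, its only $\mathbb{Q}[x^{\pm m}]$-submodules are $0$ and $K$, so $M$ is either $0$ (because $A$ is torsion-free) or of finite index in $A$; accordingly $N$ is infinite cyclic or of finite index in $\Gamma_f$. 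This proves $(2)$.

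The core is $(1)\Rightarrow(3)$, which I would organize as a chain of reductions, at each step either invoking the structure theory of finitely generated metabelian minimax groups or exhibiting an explicit pair of finitely generated subgroups with non-finitely-generated intersection. The basic obstruction is that neither $\mathbb{Z}\wr\mathbb{Z}$ nor $C_p\wr\mathbb{Z}$ is Howson: in $C_p\wr\mathbb{Z}$ with base $\bigoplus_{i}\gp{a_i}$ and $a_i^{t}=a_{i+1}$ the subgroups $\gp{a_0,a_1t^2}$ and $\gp{a_0,t^2}$ intersect in the infinite group generated by all $a_{2i}$, and the analogous trick works for $\mathbb{Z}\wr\mathbb{Z}$. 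Since the Howson property is inherited by all subgroups, a Howson finitely generated metabelian group contains no such subgroup; by the structure theory this first forces $Q$ to be virtually cyclic (a free abelian section of rank $\ge2$ acting on a suitable finitely generated submodule produces a non-Howson configuration spread over two independent directions), so after replacing $G$ by a finite-index subgroup — harmless for the conclusion and compatible with Howson — we may take $Q=\gp{x}\cong\mathbb{Z}$, $G=A\rtimes\gp{x}$. Next, absence of a rank-one free $\mathbb{Z}[x^{\pm1}]$-submodule (which would give $\mathbb{Z}\wr\mathbb{Z}$) makes $A$ a $\mathbb{Z}[x^{\pm1}]$-torsion module, hence of finite torsion-free rank, while finiteness of the torsion subgroup $T$ of $A$ follows because an infinite $p$-primary part contains a free $\mathbb{F}_p[x^{\pm1}]$-submodule of rank one, i.e. a copy of $C_p\wr\mathbb{Z}$; thus $A$ is minimax. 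Describing the rational module $A\otimes\mathbb{Q}$ over the principal ideal domain $\mathbb{Q}[x^{\pm1}]$ and ruling out, once more by a coset-incompatibility argument, any nontrivial decomposition or repeated invariant factor, one finds $A\otimes\mathbb{Q}\cong\mathbb{Q}[x]/(f)$ with $f$ irreducible; taking a nonzero $v\in A/T$, the cyclic submodule $\mathbb{Z}[x^{\pm1}]v\cong\mathbb{Z}[x^{\pm1}]/(f)$ has finite index in $A/T$, so $\mathbb{Z}[x^{\pm1}]v\rtimes\gp{x}\cong\Gamma_f$ sits with finite index in $G/T$, which exhibits $G$ in the shape $(3)$. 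Finally, if $\mathbb{Q}(\alpha^n)\subsetneq\mathbb{Q}(\alpha)$ for some $n$, then $A\otimes\mathbb{Q}$ carries a proper nonzero $\mathbb{Q}[x^{\pm n}]$-submodule, whose trace on $A$ is an $x^n$-invariant submodule $M$ of intermediate rank; $M$ and a suitably translated copy of it generate finitely generated subgroups whose intersection is an infinitely generated subgroup of $A$, again contradicting Howson, so $f$ satisfies the arithmetic condition.

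\textbf{The main obstacle.} Everything hard is concentrated in $(1)\Rightarrow(3)$, and in two places in particular. The first is making the passage from ``$G$ contains no bad wreath subgroup'' to ``$Q$ is virtually cyclic and $A$ is minimax'' completely rigorous: this is where one leans hardest on the structure theory of finitely generated metabelian minimax modules, and some care is needed because the Howson property is only known to descend to subgroups, not to sections, so the offending wreath products must actually be located as subgroups. The second is the recurring construction: in every situation where the module admits ``too many'' invariant submodules — a large torsion part, $Q$ of rank $\ge2$, a decomposable or non-squarefree rational module, or a proper subfield $\mathbb{Q}(\alpha^n)$ — one must build the pair $H_1,H_2$ explicitly, typically by fixing complementary sections $s_i=(b_i,x^{m})$ so that the affine translates $b_i\,(x^{km}-1)/(x^{m_i}-1)+M_i$ never meet, which drives $H_1\cap H_2$ into the base module where it is infinitely generated. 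Running this device uniformly across all these cases is the delicate heart of the proof.
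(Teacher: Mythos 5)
This statement is not proved in the paper at all: it is Kirkinskij's theorem, quoted from \cite{Kir} and used as a black box, so there is no internal proof to compare with and your attempt has to stand on its own. Judged that way, the two easy directions are essentially sound. $(2)\Rightarrow(1)$ is fine. In $(3)\Rightarrow(2)$ the reduction to $\Gamma_f$ and the module picture are correct, but the step ``$M\otimes\mathbb{Q}=K$, hence $M$ has finite index in $A$'' is not automatic, since $A$ is not finitely generated as an abelian group and a torsion quotient $A/M$ could a priori be infinite. You need to add that $A$ is generated over $\mathbb{Z}[x^{\pm m}]$ by $1,\alpha,\dots,\alpha^{m-1}$, so some integer $D\neq 0$ has $DA\subseteq M$, and that $(\mathbb{Z}/D)[x^{\pm 1}]/(f)$ is finite because $f$ has unit content; this closes that (fixable) gap. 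Likewise the claim that an infinite $p$-primary part of $A$ contains a genuine $\mathbb{F}_p[x^{\pm1}]$-free submodule (not just a free section) needs the socle argument: $T_p$ infinite of bounded exponent forces $T_p[p]$ infinite, and $T_p[p]$ is a finitely generated $\mathbb{F}_p[x^{\pm1}]$-module, hence has a free rank-one summand. And ``finite torsion-free rank plus finite torsion'' does not by itself yield minimax (e.g.\ $\mathbb{Q}$), though this slip does not derail your plan.

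The genuine gap is $(1)\Rightarrow(3)$, which carries all the content of the theorem and which your text does not prove but only outlines. The decisive steps --- passing from ``$G$ has no $\mathbb{Z}\wr\mathbb{Z}$ or $C_p\wr\mathbb{Z}$ subgroup'' to ``the top $Q$ is virtually cyclic''; showing $A\otimes\mathbb{Q}\cong\mathbb{Q}[x]/(f)$ with $f$ irreducible (excluding decomposable rational modules and repeated invariant factors); and deriving the arithmetic condition $\mathbb{Q}(\alpha^n)=\mathbb{Q}(\alpha)$ for all $n$ --- are each asserted with only a one-line hint (``coset-incompatibility argument''), and you yourself flag them as unresolved obstacles. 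Since the Howson property passes to subgroups but not to sections, each of these exclusions requires an explicit pair of finitely generated subgroups with infinitely generated intersection built inside the given module, uniformly over modules that may have mixed torsion and relations linking the relevant ``directions''; sketching the wreath-product trick in the clean cases does not discharge this. As it stands you have $(2)\Rightarrow(1)$ and $(3)\Rightarrow(2)$, but not the implication that makes the three conditions equivalent, so the proposal does not yet constitute a proof of the theorem; either complete the $(1)\Rightarrow(3)$ analysis in detail or, as the paper does, cite \cite{Kir}.
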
 

\begin{theorem}
\label{th:13}
Let   $G$  be a finitely generated metabelian group such that  $Rat(G)$  is a boolean algebra. Then  $G$ is virtually abelian group. 
\end{theorem}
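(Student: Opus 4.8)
The plan is to reduce the finitely generated metabelian case to the polycyclic case already handled in Theorem~\ref{th:pol}, using the fact that a boolean rational algebra forces the Howson property, together with Kirkinskij's structure theorem. First I would observe that if $G$ is polycyclic we are immediately done by Theorem~\ref{th:pol}, so I would assume $G$ is finitely generated metabelian and \emph{nonpolycyclic}, aiming for a contradiction. Since $Rat(G)$ is a boolean algebra, $G$ has the Howson property, so Theorem~\ref{th:12} applies: $G$ has a finite-index subgroup $H$ with a normal finite subgroup $T$ such that $\bar H = H/T$ is isomorphic to the explicit group $\mathrm{gp}(x,a\mid [a,a^{x^i}]=1,\ a^{f(x)}=1)$ for an irreducible polynomial $f$ with the stated non-divisibility property and $\deg f \geq 1$.

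Next I would transfer the boolean-algebra hypothesis down to $\bar H$. By Lemma~\ref{le:2} a rational subset of $G$ contained in $H$ is already rational in $H$, and since $H$ has finite index, $Rat(H)$ is a boolean algebra (a routine check: $H$ differs from $G$ by finitely many cosets, and complements, intersections and unions behave well under this). Then, since $T$ is finite hence finitely generated, Lemma~\ref{le:4} shows that $\varphi^{-1}$ of a rational set is rational and $\varphi$ of a rational set is rational, so $Rat(\bar H)$ is also a boolean algebra. Thus it suffices to derive a contradiction from the assumption that $\bar H$ — the concrete metabelian group presented above — has $Rat(\bar H)$ a boolean algebra; equivalently, that $\bar H$ is a Howson group, which we already know, and moreover that it is \emph{not} virtually abelian, which follows because $\deg f \geq 1$ together with the non-divisibility condition prevents any finite-index subgroup from being abelian (the base group is infinite rank as a module over the relevant cyclic group).

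The heart of the argument, and the main obstacle, is to show directly that in this specific group $\bar H = \mathrm{gp}(x,a\mid \ldots)$ the class $Rat(\bar H)$ fails to be a boolean algebra — i.e. to exhibit a rational subset whose complement is not rational, or two rational subsets whose intersection is not finitely generated (violating Howson), or to contradict the Pumping Lemma somehow. The natural candidate is the base module $M = \mathrm{gp}(a^{x^i} : i \in \mathbb{Z})$, which is rational in $\bar H$ since it is the kernel of the map to $\mathrm{gp}(x)$ applied suitably, or obtained as an intersection of rational sets of the form $(x^{-1})^{\ast} a x^{\ast}$-type constructions intersected with $M$ as in the proof of Theorem~\ref{th:pol}. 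One then examines conjugation orbits $\{x^{-n} a x^n : n \in \mathbb{Z}\}$ inside $M$: the non-divisibility property of $f$ forces these orbits to generate, over any sublattice $n \in N\mathbb{Z}$, a subgroup of $M$ of infinite index or infinite rank, which via an argument parallel to the one in Theorem~\ref{th:pol} (building sets $w^{\ast}f^{\ast}$ and intersecting) produces a rational subset forcing a relation $g^r = f^s$ that the presentation forbids, or produces an infinite rational set to which the Pumping Lemma cannot be applied consistently.

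Finally, having obtained the contradiction, I conclude that no finitely generated metabelian nonpolycyclic group can have $Rat(G)$ a boolean algebra; hence any such $G$ with boolean $Rat(G)$ is polycyclic, and then Theorem~\ref{th:pol} gives that $G$ is virtually abelian. I expect the delicate point to be making the link between the combinatorial non-divisibility hypothesis on $f$ in Theorem~\ref{th:12} and the failure of rationality — in particular, choosing the right pair of rational sets in $\bar H$ and carrying out the limit/lattice argument (analogous to the $\varepsilon$-approximation step in the proof of Theorem~\ref{th:pol}) in the module $M$ viewed as $\mathbb{Z}[x]/(f(x))$, where the action of $x$ has no eigenvalue a root of unity, so that the conjugation orbit of $a$ is genuinely unbounded and cannot be captured by a pumping decomposition.
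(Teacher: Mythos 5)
Your reduction steps coincide with the paper's: polycyclic case via Theorem~\ref{th:pol}, then (for $G$ nonpolycyclic) the Howson property from the boolean algebra hypothesis, Kirkinskij's Theorem~\ref{th:12} to get $1\leq T\lhd H\leq G$ with $\bar H=H/T=\mathrm{gp}(x,a\mid [a,a^{x^i}]=1,\ a^{f(x)}=1)$, and the transfer of the boolean algebra property to $Rat(\bar H)$ via finite index and Lemma~\ref{le:4}. Up to that point you match the paper. But the heart of the proof --- deriving an actual contradiction inside $\bar H$ --- is exactly the part you leave as a hope (``an argument parallel to the one in Theorem~\ref{th:pol}\dots or produces an infinite rational set to which the Pumping Lemma cannot be applied consistently''), and this is a genuine gap, not a routine verification. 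Two of your suggested exits cannot work as stated: you cannot contradict the Howson property of $\bar H$ itself, because Kirkinskij's theorem says precisely that this group \emph{is} Howson, so the contradiction must use the boolean algebra hypothesis beyond Howson (or pass to a suitable quotient); and the $\varepsilon$-approximation/lattice argument of Theorem~\ref{th:pol} does not transfer, since the base module $\mathbb{Z}[x,x^{-1}]/(f(x))$ is of finite rank but \emph{not} finitely generated as an abelian group (that is what nonpolycyclicity means here), so there is no lattice in $\mathbb{R}^r$ to run the norm estimate in, and the Pumping Lemma applies perfectly well to the infinite rational sets you would produce --- it never ``fails,'' so no contradiction comes from it alone.

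What the paper actually does at this point is quite different and genuinely delicate: it builds four explicit rational sets $R_1,\dots,R_4$ from words in $a^dx^{\pm p}$ and $\{1,a\}x^{\pm p}$, with the parameters $d$ and $p$ chosen arithmetically (ultimately $d=38$, $p>m+1$) in terms of the extreme coefficients $q_0,q_m$ of $f$. The boolean algebra hypothesis makes $S_1=R_1\cap R_2$ and $S_2=R_3\cap R_4$ rational, and a dichotomy follows: either both lie in $N=\mathrm{ncl}(a)$, in which case Lemma~\ref{le:3} forces $N$ to be finitely generated and hence $\bar H$ polycyclic (contradiction), or one of the explicit relations \eqref{eq:57}--\eqref{eq:60} holds, which with the chosen $d,p$ forces divisibility constraints on the coefficients of $f$, making $K=\mathrm{ncl}(a^{\mu})$ finitely generated; then $(\bar H)/K$ is either polycyclic (again forcing $\bar H$ polycyclic) or isomorphic to $\mathbb{Z}_{\mu}\,wr\,\mathbb{Z}$, which is not Howson --- and it is only in this quotient that the Howson property gets violated, legitimately, because $Rat((\bar H)/K)$ inherits the boolean algebra property by Lemma~\ref{le:4}. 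Your proposal contains none of this mechanism (the choice of rational sets, the coefficient arithmetic, the passage to the wreath-product quotient), so as written it establishes the reduction but not the theorem.
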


\begin{proof}
If $G$ is polycyclic the statement follows by theorem \ref{th:pol}. 
Suppose  $G$ is not polycyclic. Then by theorem  \ref{th:12}  $G$ has a series  $1 \leq T \lhd H \leq G$. Since   $H$  is finitely generated, then  $Rat(H)$ is a boolean algebra. Since  $T$  is finite then by lemma \ref{le:4} $Rat(H/T)$  is a boolean algebra.

Let by theorem \ref{th:12} $H/T \simeq $ gp$(x, a| [a,a^{x^i}] =1, i \in \mathbb{Z}, a^{f(x)} =1),$ $f(x) =  q_0x^m + q_1x^{m-1} + ... + q_m.$
Note that every element   $g \in H/T$ can be expressed as  $g = x^ka^{\frac{r(x)}{x^l}},$  where  $k, l \in \mathbb{Z}, l \geq 0,$ and  $r(x)$  is a polynomial  with integer coefficients. One has   $g = 1$ if and only if  $k = 0$ and $r(x)$   divides to $f(x)$  in the polynomial ring   $\mathbb{Z}[x].$

Fix some numbers $p, d \in \mathbb{Z}, p, d > 0.$ Define the following rational sets in $H/T$: 
$$
R_1 = ((a^dx^p)^{-1})^{\ast}(\{a^d, a^{d+1}\}x^p)^{\ast},$$
$$R_2 = (x^{-p})^{\ast}(\{1, a\}x^p)^{\ast},$$
$$R_3 = ((a^dx^{-p})^{-1})^{\ast}(\{a^d, a^{d+1}\}x^{-p})^{\ast},$$
$$
\label{eq:48}
R_4 = (x^p)^{\ast}(\{1, a\}x^{-p})^{\ast}.
$$

By our assumption all  intersections $R_i\cap R_j$ for $i,j = 1, ..., 4,$  are rational. 

Any element of   $R_1$  can be written in the form: 

$$
(a^dx^p)^{-l+k}(a^{\epsilon_1})^{x^{kp}}(a^{\epsilon_2})^{x^{(k-1)p}}...(a^{\epsilon_k})^{x^{p}},
$$
 
\noindent
where   $l, k \in \mathbb{Z}, l, k \geq 0,$  and    $\epsilon_i = 0$     or  $\epsilon_i = 1.$ 

Any element of  $R_2$ can be written in the form:   

$$
x^{(-l+k)p}(a^{\epsilon_1})^{x^{kp}}(a^{\epsilon_2})^{x^{(k-1)p}}...(a^{\epsilon_k})^{x^{p}},
$$

\noindent
$l, k \in \mathbb{Z}, l, k \geq 0,$ $\epsilon_i = 0$   or  $\epsilon_i = 1.$  

Any element of    $R_3$ can be written in the form: 

$$
(a^dx^{-p})^{-l+k}(a^{\epsilon_k})^{x^{-p}}(a^{\epsilon_{k-1}})^{x^{-2p}} ... (a^{\epsilon_1})^{x^{-kp}},
$$ 

\noindent
$l, k \in \mathbb{Z}, l, k \geq 0,$  where   $\epsilon_i = 0$  or  $\epsilon_i = 1.$   

Any element of  $R_4$ can be written in the form:  

$$
x^{(l-k)p}(a^{\epsilon_k})^{x^{-p}}(a^{\epsilon_{k-1}})^{x^{-2p}} ... (a^{\epsilon_1})^{x^{-kp}},
$$ 

\noindent
$l, k \in \mathbb{Z}, l, k \geq 0,$ $\epsilon_i = 0$   or $\epsilon_i = 1.$   Also note that for    $n> 0$ we have:

$$
(a^dx^p)^n = x^{np}(a^{d})^{x^{np}} ... (a^d)^{x^p},
$$

$$
(a^dx^p)^{-n} = x^{-np}(a^{-d})^{x^{-(n-1)p}} ... (a^{-d})^{x^{-p}}a^{-d},
$$

$$
(a^dx^{-p})^n = x^{-np}(a^{d})^{x^{-np}} ... (a^d)^{x^{-p}},
$$

\noindent
and

$$
(a^dx^{-p})^{-n} = x^{np}(a^{-d})^{x^{(n-1)p}} ... (a^{-d})^{x^p}a^{-d}.
$$

The sets $R_1$ and $R_2$ contain the elements

$$a^{x^{kp}} = ((a^dx^p)^{-1})^k\cdot (a^{d+1}x^p)(a^dx^p)^{k-1}=$$
$$(x^{-p})^k\cdot (ax^p)(x^p)^{k-1}, k = 1, 2, ... .$$

Similarly, the sets $R_3$ and $R_4$ contain the elements $a^{x^{-kp}}, k = 1, 2, ... .$

Let  $N = $ ncl$(a)$ be the normal closure of the element   $a$ in $H/T$ (that is the minimal normal subgroup of $N/T,$ containing  $a$).  If the sets   $S_1 = R_1 \cap R_2$  and   $S_2 = R_3 \cap R_4$ lie in  $N,$ then the subgroup   $M$ generated by  $S_1 \cup S_2 \cup \{a\},$ is the normal closure of   $a$ in the subgroup generated by  $a$  and $x^p.$  By lemma \ref{le:3} every subgroup generated by a rational set is rational and finitely generated.  Since   $N$ is generated by a finite set of subgroups that are conjugate to  $M,$ it is finitely generated too. In the case   $H/T$ and $G$ are polycyclic. We get contradiction to our assumption. Hence at least one of the subsets   $S_i, i = 1, 2,$ does not lie in  $N.$ Then one of the following equalities is true:  

$$
(a^{\epsilon_n})^{x^{np}} ... 
(a^{\epsilon_{k+1}})^{x^{(k+1)p}}(a^{d+\epsilon_k})^{x^{kp}}\cdot 
$$
\begin{equation}
\label{eq:57}
(a^{d+\epsilon_{k-1}})^{x^{(k-1)p}} ... (a^{d+\epsilon_1})^{x^{p}} = 1,
\end{equation}

\begin{equation}
\label{eq:58}(a^{\epsilon_n})^{x^{np}}...(a^{\epsilon_{1}})^{x^{p}}(a^{-d})(a^{-d})^{x^{-p}}...(a^{-d})^{x^{-1p}} = 1,
\end{equation}

\begin{equation}
\label{eq:59}
(a^{d+\epsilon_{1}})^{x^{-p}} ... (a^{d+\epsilon_k})^{x^{-kp}}
(a^{\epsilon_{k+1}})^{x^{-(k+1)p}}   ... (a^{\epsilon_n})^{x^{-np}} = 1,
\end{equation}

\begin{equation}
\label{eq:60}
(a^{-d})^{x^{lp}} ...
(a^{-d})^{x^{p}}(a^{-d})(a^{\epsilon_1})^{x^{-p}}(a^{\epsilon_k})^{x^{-kp}} = 1,
\end{equation}

\noindent
$l, k, n \in \mathbb{Z}, l \geq 0, n \geq k > 0, \epsilon_i \in \{-1, 0, 1\}.$    If  the absolute value $\mu $ of  one of the coefficients  $q_0, q_m$ in $f(x)$ is greater than $3$, we may assume that chosen number  $d$ is such that  $d-1, d$ and $d+1$  do not divide to  $\mu .$  It follows that all equalities  \ref{eq:57} --    \ref{eq:60} failed.  Further, both of the coefficients  $q_0, q_m$   cannot be $\pm 1,$  because in the case   $H/T$ is polycyclic. Thus  $q_0$ or  $q_m$  is equal to $2$ or $3.$  We set  $d = 2^2\cdot 3^2 + 2 = 38.$ Then
 $d-1, d, $  and $d+1$  do not divide to   $\mu^2.$   We can assume that  $p > m+1.$ Then everyone of the equalities   \ref{eq:57} -- \ref{eq:60} implies that all the coefficients of   $f(x)$  divide to  $\mu ,$ and   $|q_0| = |q_m| = \mu .$ Then the normal closure  $K=$  ncl$(a^{\mu }) \lhd H/T$  is finitely generated. By lemma  \ref{le:4}  $Rat((H/T)/K)$  is a boolean algebra.   The quotient  $(H/T)/K$ is a homomorphic image of the wreath product $Z_{\mu } wr  Z,$ where $\mu $ is prime. Hence, either $(H/T)/K$ is polycyclic, or is $Z_{\mu } wr  Z.$ In the first case $(H/T)/K$ satisfies to the ascending chain condition, and since    $K$ is finitely generated abelian, then  $H/T$ satisfies to the ascending chain condition too. Every soluble group with the ascending chain condition is polycyclic (see for instance \cite{KM}). Hence  $H/T$ is polycyclic. The second case is impossible, because    $Z_{\mu } wr  Z$ is not Howson group  (see \cite{Kir}). 

\end{proof}

\section{  Solvable  groups of type $FP_{\infty}$   with boolean algebras of rational subsets.}

\bigskip

\begin{definition}
\label{def:7}
A group  $G$ is said to be of type   FP$_{\infty}$ if and only if there is a projective resolution  
\begin{equation}
\label{eq:61} ... \rightarrow P_j \rightarrow ... \rightarrow P_2 \rightarrow P_1 \rightarrow P_0 \rightarrow \mathbb{Z} \rightarrow 0
\end{equation}

\noindent
of finite type: that is, in which every $P_j$ is finitely generated. 
\end{definition}

\begin{definition}
\label{def:8}
A group $G$ has finite cohomological dimension if and only if there is a projective resolution  
\begin{equation}
\label{eq:62} ... \rightarrow P_n \rightarrow ... \rightarrow P_2 \rightarrow P_1 \rightarrow P_0 \rightarrow \mathbb{Z} \rightarrow 0
\end{equation}

\noindent
of finite length: that is, in which the $P_i$ are zero from some point on. 
\end{definition}

The following remarkable theorem is a base in our proof of the main result of this section. 

\begin{theorem} (Kropholler  \cite{K3},  see also \cite{K1}).
\label{th:14}
If   $G$ is a soluble group of type   FP$_{\infty}$  then vcd$(G) < \infty $.   
\end{theorem}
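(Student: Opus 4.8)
The statement is Kropholler's theorem, so the plan is to reconstruct its proof from the structure theory of soluble groups of type FP$_{\infty}$. I would first dispose of the soft reductions. Type FP$_{\infty}$ implies FP$_1$, so $G$ is finitely generated, and FP$_{\infty}$ is inherited by subgroups of finite index; hence it suffices to produce one torsion-free subgroup $H$ of finite index in $G$ with cd$(H) < \infty$. For this it is enough to establish: (i) every soluble group of type FP$_{\infty}$ is \emph{constructible} in the sense of Baumslag--Bieri, i.e.\ built from the trivial group by finitely many steps, each of which is either a finite extension or an ascending HNN extension with finitely generated base group; and (ii) a constructible soluble group is virtually torsion-free, and a torsion-free constructible soluble group has finite cohomological dimension (indeed it is a duality group of dimension equal to its Hirsch length).

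The real content is (i), which I would prove by induction on the derived length $d$ of $G$. The case $d \le 1$ is immediate, since a finitely generated abelian group is polycyclic and hence constructible. For $d \ge 2$, let $A = G^{(d-1)}$ be the last nontrivial term of the derived series (an abelian normal subgroup), and put $Q = G/A$, a soluble group of derived length $d-1$. The finiteness property FP$_{\infty}$ of $G$ translates, through the Bieri--Strebel theory (together with the Bieri--Strebel criterion that a finitely generated metabelian group of type FP$_2$ is finitely presented, used in the bottom layer), into strong control on the $\mathbb{Z}Q$-module $A$: it is finitely generated over $\mathbb{Z}Q$, and its Bieri--Strebel geometric invariant $\Sigma_A(Q)$ on the character sphere $S(Q)$ is so large that the complement $S(Q) \setminus \Sigma_A(Q)$ is a rationally defined polyhedral set containing no pair of antipodal points. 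One then checks that an appropriate quotient of $G$, by a suitable $\mathbb{Z}Q$-submodule of $A$, is again soluble of the same finiteness type but of lower complexity, so that the inductive hypothesis applies, and a finite family of characters whose open half-spaces cover $S(Q)$ and lie inside $\Sigma_A(Q)$ permits $G$ to be rebuilt as an iterated ascending HNN extension over finitely generated subgroups sitting above a constructible group. That is exactly constructibility of $G$.

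For (ii) I would appeal to the Gildenhuys--Strebel cohomology theory of soluble groups: a torsion-free soluble group is of type FP over $\mathbb{Z}$ precisely when it is constructible, and such a group has cohomological dimension equal to its Hirsch length, which is finite; moreover a constructible soluble group has finite Hirsch length and contains a characteristic torsion-free subgroup of finite index. Applying the cd-computation to that subgroup gives vcd$(G) = h(G) < \infty$.

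The principal obstacle is step (i), and inside it two separate difficulties: first, extracting from FP$_{\infty}$ the module- and $\Sigma$-invariant control on the abelian layers, which even in the metabelian case rests on the non-elementary Bieri--Strebel finite-presentation criterion and its higher-degree refinements; and second, the geometric-invariant argument that turns the combinatorial condition ``the complement of $\Sigma$ has no antipodal pair'' into an explicit ascending-HNN tower presentation of $G$. That combinatorial-to-structural passage is where the depth of Kropholler's and Bieri--Strebel's work lies, and I would not expect to bypass it.
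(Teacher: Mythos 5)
The paper does not prove Theorem \ref{th:14} at all: it is imported verbatim from Kropholler \cite{K3} (see also \cite{K1}) and used as a black box in Sections 5 and 6, so there is no internal proof to compare yours with; the only question is whether your reconstruction stands on its own. Your step (ii) is sound: constructible soluble groups are virtually torsion-free and a torsion-free constructible soluble group has finite cohomological dimension equal to its Hirsch length (Baumslag--Bieri, Gildenhuys--Strebel), and the finite-index reductions at the start are routine. The problem is step (i), which is where the entire content of the theorem sits, and the induction on derived length you sketch has concrete failure points rather than just unfinished details.

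Specifically: (a) for derived length $d\geq 3$ the layer $A=G^{(d-1)}$ need not be finitely generated as a $\mathbb{Z}Q$-module --- that finiteness is itself a restriction of the kind one is trying to prove, and FP$_{\infty}$ of $G$ does not hand it to you layer by layer; (b) FP-type conditions do not pass to quotients, so the assertion that a quotient of $G$ by a suitable submodule of $A$ ``is again soluble of the same finiteness type'' is exactly the sort of claim that needs a real argument and in general fails; (c) the translation of FP$_{\infty}$ into conditions on the Bieri--Strebel invariant beyond the metabelian FP$_2$ case (your ``higher-degree refinements'') is the territory of the Bieri--Groves FP$_m$-conjecture, which is not available as a tool, and for a non-abelian quotient $Q$ acting on a higher derived layer there is no such $\Sigma$-theory at all. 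Kropholler's actual proof does not follow this route: he first proves finite torsion-free rank for soluble FP$_{\infty}$ groups by a homological argument with complete (Tate) cohomology and the hierarchical class of groups treated in \cite{K3}, and only then invokes the finite-rank structure theory (minimax, virtually torsion-free, constructibility/duality in the sense of Gildenhuys--Strebel) to conclude vcd$(G)<\infty$. So your outline names the correct end-state (soluble FP$_{\infty}$ groups are virtually torsion-free and constructible), but the path you propose to it defers, and in places contradicts, the genuinely hard step, so as written it is a plan rather than a proof.
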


Also we need in a standard statement as follows.

\begin{lemma}
\label{le:14}
If   cd$(G) < \infty $, then:
1) $G$ is torsion-free,\\
2) there is  $n > 0,$  such that: if  $A \leq G, A \simeq \mathbb{Z}^k,$ then   $k\leq n.$
\end{lemma}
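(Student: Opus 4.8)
\textbf{Proof proposal for Lemma \ref{le:14}.}

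The plan is to deduce both assertions from the one standard structural fact about cohomological dimension, namely its monotonicity under passage to subgroups: if $H\leq G$ then $\mathrm{cd}(H)\leq \mathrm{cd}(G)$. This is immediate from the definition: a projective (indeed free) resolution of $\mathbb{Z}$ over $\mathbb{Z}G$ of length $n$ is, after restriction of scalars along $\mathbb{Z}H\hookrightarrow \mathbb{Z}G$, a resolution of $\mathbb{Z}$ by projective $\mathbb{Z}H$-modules of length $\leq n$, because $\mathbb{Z}G$ is free as a $\mathbb{Z}H$-module on any set of coset representatives. No finite-index hypothesis is needed.

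For part 1, I would argue by contradiction. Suppose $G$ contains a nontrivial torsion element; replacing it by a suitable power we get an element $g$ of prime order $p$, so $C=\mathrm{gp}(g)\simeq \mathbb{Z}/p\mathbb{Z}$ is a subgroup of $G$. By monotonicity $\mathrm{cd}(C)\leq \mathrm{cd}(G)<\infty$. But a nontrivial finite cyclic group has infinite cohomological dimension, since its cohomology is periodic and nonzero in arbitrarily high degrees (e.g. $H^{2j}(\mathbb{Z}/p\mathbb{Z};\mathbb{Z}/p\mathbb{Z})\neq 0$ for every $j\geq 0$). This contradiction forces $G$ to be torsion-free.

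For part 2, set $n=\mathrm{cd}(G)<\infty$. If $A\leq G$ with $A\simeq \mathbb{Z}^k$, then by monotonicity $\mathrm{cd}(A)\leq n$; on the other hand $\mathrm{cd}(\mathbb{Z}^k)=k$, because the Koszul resolution of $\mathbb{Z}$ over the Laurent ring $\mathbb{Z}[\mathbb{Z}^k]$ (equivalently, the cellular chain complex of the universal cover of the $k$-torus) has length exactly $k$, and $H^k(\mathbb{Z}^k;\mathbb{Z})\simeq\mathbb{Z}\neq 0$ shows the length cannot be reduced. Hence $k\leq n$, as claimed. I do not anticipate any real obstacle here: the lemma is a textbook consequence of the basic machinery of group cohomology, and the only care required is to cite correctly the two standard computations used — infinite $\mathrm{cd}$ for nontrivial finite groups and $\mathrm{cd}(\mathbb{Z}^k)=k$ — which I would quote from a standard reference on the cohomology of groups rather than reprove.
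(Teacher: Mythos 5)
Your proof is correct, and it is exactly the standard argument the paper relies on: the paper states Lemma \ref{le:14} without proof, calling it a standard fact. Your use of monotonicity of cohomological dimension under subgroups, together with the two textbook computations that a nontrivial finite (cyclic) group has infinite cohomological dimension and that $\mathrm{cd}(\mathbb{Z}^k)=k$, fills that gap correctly and in the expected way.
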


At this section we will specialize to soluble groups of type FP$_{\infty}$ and establish: 

\begin{theorem} 
\label{th:15}
If  $G$   is a finitely generated soluble group of type (FP)$_{\infty},$  such that $Rat(G)$ is a boolean algebra. Then    $G$ is virtually abelian group.
\end{theorem}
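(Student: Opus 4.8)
The plan is to reduce, once again, to the metabelian case settled in Theorem~\ref{th:13}, using Kropholler's Theorem~\ref{th:14} to force the structure of $G$ down to something manageable.

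First I would invoke Theorem~\ref{th:14}: since $G$ is soluble of type FP$_\infty$, it has finite virtual cohomological dimension, hence a finite index subgroup $G_0$ with $\mathrm{cd}(G_0)<\infty$. The subgroup $G_0$ is again finitely generated, soluble and of type FP$_\infty$ (these are inherited by finite index subgroups), and $Rat(G_0)$ is again a boolean algebra: $G_0$ is finitely generated, hence a rational subset of $G$ by Lemma~\ref{le:3}, and by Lemma~\ref{le:2} one has $Rat(G_0)=\{R\in Rat(G):R\subseteq G_0\}$, which is closed under complementation inside $G_0$ because $G_0\setminus A=G_0\cap(G\setminus A)$ and $Rat(G)$ is a boolean algebra. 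By Lemma~\ref{le:14}, $G_0$ is torsion-free and the ranks of its free abelian subgroups are bounded, so $G_0$ has finite Hirsch length; and Kropholler's work (see the references in Theorem~\ref{th:14}) moreover shows that a soluble group of type FP$_\infty$ is minimax. So we may assume from now on that $G$ itself is a torsion-free, finitely generated, soluble minimax group with $Rat(G)$ a boolean algebra.

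If $G$ is virtually polycyclic, then a finite index polycyclic subgroup of it is finitely generated with boolean algebra of rational subsets, so Theorem~\ref{th:pol} shows that $G$ is virtually abelian. So assume $G$ is not virtually polycyclic. The plan is to exhibit inside $G$ a finitely generated metabelian subgroup $H$ that is \emph{not} polycyclic; then $Rat(H)$ is a boolean algebra (a finitely generated subgroup, as above), so Theorem~\ref{th:13} forces $H$ to be virtually abelian, hence polycyclic-by-finite, hence -- being soluble -- polycyclic, contradicting that $H$ is not polycyclic. To construct $H$ I would use the structure theory of finitely generated soluble minimax groups: the failure of $G$ to be virtually polycyclic is carried by a nilpotent normal subgroup, and hence -- passing to an appropriate lower central factor and using that $G$ is torsion-free -- by a torsion-free abelian minimax section of finite rank that is not finitely generated. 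Such a section contains a copy of $\mathbb{Z}[1/p]$ for some prime $p$. Using the action of a suitable element $g\in G$ of infinite order on the relevant factor, one extracts a $\langle g\rangle$-invariant subgroup $B\cong\mathbb{Z}[1/m]$ (with $m>1$) on which $g$ acts so that $B$ is finitely generated as a module over $\langle g\rangle$; then $H=\langle B,g\rangle$ is a finitely generated metabelian subgroup of $G$ containing the infinitely generated abelian subgroup $B$, hence $H$ is not polycyclic, and we obtain the desired contradiction. Therefore $G$, and so the original group, is virtually abelian.

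The main obstacle is precisely the construction of the pair $(g,B)$: proving that a torsion-free, finitely generated, soluble minimax group which is not virtually polycyclic must contain a finitely generated, non-polycyclic metabelian subgroup of Baumslag--Solitar type $\mathbb{Z}[1/m]$-by-$\mathbb{Z}$. This is the only point at which the hypothesis of type FP$_\infty$ is genuinely used: it is what makes Kropholler's Theorem~\ref{th:14} applicable and hence upgrades ``finite Hirsch length'' to ``minimax'', the structural input on which the extraction of $H$ rests; for arbitrary finitely generated soluble groups the corresponding statement is open. Granting that structural fact, the remaining steps -- inheritance of finiteness properties under passage to finite index subgroups and to finitely generated subgroups, and the two appeals to Theorems~\ref{th:pol} and~\ref{th:13} -- are routine.
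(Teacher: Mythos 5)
Your reduction has a genuine gap at exactly the point you flag as the ``main obstacle'': the claim that a torsion-free, finitely generated, soluble minimax group which is not virtually polycyclic must contain a finitely generated, non-polycyclic metabelian subgroup $H=\langle B,g\rangle$ with $B\cong\mathbb{Z}[1/m]$ finitely generated as a $\langle g\rangle$-module. You do not prove this, and it is not a quotable off-the-shelf statement; it is the entire content of your argument once the routine reductions are made. It is also not clear that the extraction can be done so directly: the failure of virtual polycyclicity may be carried by sections rather than subgroups on which some single element acts suitably (for instance, a divisible-type abelian piece sitting centrally in a nilpotent normal subgroup cannot serve as your $B$, since a central $\mathbb{Z}[1/m]$ is never finitely generated over any cyclic acting group), so one must first do structural work to move the non-polycyclicity into a split, non-central position. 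Everything before that point (passing to a finite-index subgroup $G_0$ with $\mathrm{cd}(G_0)<\infty$, inheritance of the boolean-algebra property via Lemmas~\ref{le:2} and~\ref{le:3}, torsion-freeness and bounded abelian rank from Lemma~\ref{le:14}, minimaxity via Kargapolov and Robinson--Za\v{i}cev) is fine and agrees with the paper.

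The paper closes this gap by a different route that avoids constructing such a subgroup. From ``torsion-free soluble minimax'' it quotes that $H$ is nilpotent-by-(virtually abelian), with $N\lhd H$ nilpotent and $H/N$ virtually abelian. It then shows $N$ must be abelian: if some $u\in N$ and $y\in\zeta_2(N)\setminus\zeta_1(N)$ fail to commute, then (using torsion-freeness and that quotients of torsion-free nilpotent groups by their centres are torsion-free) $\mathrm{gp}(u,y)\cong UT_3(\mathbb{Z})$, the Heisenberg group, which is not virtually abelian, and by Bazhenova's theorem \cite{B} its rational subsets do not form a boolean algebra --- contradicting that $Rat$ of this finitely generated subgroup inherits the boolean-algebra property. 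Hence $N$ is abelian, so $H$ is (abelian)-by-(virtually abelian), i.e.\ virtually metabelian, and Theorem~\ref{th:13} applied to a metabelian subgroup of finite index finishes the proof. If you want to salvage your approach, you would need to supply a proof of your structural extraction claim; as written, the argument assumes the hardest step.
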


\begin{proof}
By theorem   \ref{th:14} there is a subgroup of finite index $H \leq G$, that has finite cohomological dimension. The subgroup  $H$ is finitely generated and thus rational in  $G.$ Then $Rat(H)$ is a boolean algebra. By  lemma \ref{le:14}  $H$ is torsion-free. Hence, every finitely generated abelian subgroup of  $H$ is a free abelian of bounded rank. By Kargapolov's theorem  (see \cite{KM}  or  \cite{LR}), $H$ has finite rank, i.e.,  there is a finite number $r$ such that every finitely generated subgroup of $H$ can be generated by   $r$ elements and $r$ is least such integer  ({\it Pr\"{u}fer} or {\it Mal'cev} rank). 

Thus,   $H$ is a soluble torsion-free group of finite rank. By Robinson-Za\v{i}cev theorem    (see  \cite{LR}), every finitely generated soluble group with finite rank is a minimax group.  It means there is a subnormal series 

\begin{equation}
\label{eq:63}
1 = K_0 \leq K_1 \leq ... \leq K_n = K, 
\end{equation}

\noindent
in which every quotient $K_{i+1}/K_i$ satisfies to the ascending or descending condition.   Also we know (see \cite{LR}), that every soluble torsion-free minimax group is nilpotent-by-(virtually abelian).  

Let $N$  be a nilpotent normal subgroup of  $H,$  such that   $H/N$ is virtually abelian.   We will prove that  $N$  should be abelian. Let $1 \leq \zeta_1(N) \leq \zeta_2(N) \leq ... \leq \zeta_j(N) = N$ be the upper central series of $N.$  If $u \in N, y \in \zeta_2(N)\setminus \zeta_1(N)$ do not commute, then $[u, y] \in \zeta_1(H)$ has infinite order.  We know that the quotient of torsion-free nilpotent group by the center is torsion-free (see  \cite{KM}). Then the subgroup gp$(g,y)$ is isomorphic to the free nilpotent of rank $2$ and class $2$ group (UT$_3(\mathbb{Z})$ or the {\it Heizenbergh} group). But UT$_3(\mathbb{Z})$ is obviously non virtually abelian, hence $Rat(UT_3(\mathbb{Z}))$ is not boolean algebra by \cite{B}. It follows that  $\zeta_2(N) = \zeta_1(N),$ thus $N$  is abelian. Then  $H$ is extension of the abelian normal subgroup   $N$ with virtually abelian group  $H/N.$ Then by theorem \ref{th:13} $G$ is virtually abelian. 

\end{proof}

\section{Solvable biautomatic groups.}

\bigskip
The class of automatic groups is one of the main classes studying by geometric group theory. Different properties of automatic and biautomatic groups are described in the classical monography  \cite{ECHLPT}. See also \cite{GS}.  We give the main definitions. 

Let  $(A, \lambda , L)$ be a rational structure for $G.$ Recall, that  $A$ is a finite alphabet,  $\lambda $ is a homomorphism of the free monoid   $A^{\ast}$ onto  $G,$ $L$ is a regular language in  $A^{\ast}$ such that  $\lambda (L) = G.$  The set  $A$  is  a generating set for  $G,$ considered as a monoid. We assume that  $A$ is symmetric: that is $A$ contains with every its element   $a$ its formal inverse  $a^{-1}.$ We assume that homomorphisms of $A^{\ast}$ to groups map formally inverse elements to inverse images. The set  $L$ is considered as the set of normal forms of expressions of elements of $G.$ We add to   $A$ a new symbol $\$.$ Consider alphabet   $A^{2\$},$  consisting of the pairs  $(b, c),$  where   $b, c \in A \cup \$.$ Take the corresponding free monoid $(A^{2\$})^{\ast}.$ The homomorphism   $\lambda $ is naturally extended to homomorphism of the free monoid  $(A^{2\$})^{\ast}$ onto $G^2.$ One has  $\lambda (\$) = 1.$

A rational structure  $(A, \lambda , L)$ is called  {\it automatic}  for $G$ under the following conditions are satisfied:   

\bigskip
\begin{equation}
\label{eq:}
\{(u, v) \in L^{2\$}: \lambda (u) = \lambda (v)\}
\end{equation}

\noindent
and for every    $a \in A$ the language  

\begin{equation}
\label{eq:}
\{(u, v) \in L^{2\$} : \lambda (u)  = \lambda (va)\}
\end{equation} 
\noindent
is regular in  $A^{2\$}.$

\bigskip
\begin{definition}
\label{def:9}
\
A group  $G$  is said to be  {\it automatic} if $G$ has an automatic structure  $(A, \lambda , L).$  
\end{definition}

An automatic structure is said to be  {\it biautomatic}, if for every   $a \in A$ the language  

\begin{equation}
\label{eq:66} 
\{(u, v) \in L^{2\$} : \lambda (u)  = \lambda (av)\}
\end{equation} 
\noindent
is regular in  $A^{2\$}.$

\bigskip
\begin{definition}
\label{def:10}
A group    $G$ is said to be  {\it biautomatic} if $G$ has a biautomatic structure  $(A, \lambda , L).$
\end{definition}

In \cite{ECHLPT} a question is formulated: is every biautomatic group virtually abelian?  We give a positive answer to this question by the following theorem. This result has been obtained by Bazhenova,  Noskov,  Remeslennikov and the author with using of information received them from  Kropholler. 

\begin{theorem}(Bazhenova, Noskov, Remeslennikov, Roman'kov). 
\label{th:16}

\noindent
Let  $G$   be a finitely generated soluble biautomatic group. Then    $G$ is virtually abelian. 
\end{theorem}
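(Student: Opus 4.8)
The plan is to reduce the biautomatic hypothesis to the hypothesis of Theorem~\ref{th:15}, namely that $G$ is soluble of type FP$_\infty$ with $Rat(G)$ a boolean algebra. Two ingredients are needed. First, every (bi)automatic group is of type FP$_\infty$; this is a standard fact proved in \cite{ECHLPT} via the contracting combing, which yields a classifying space with finitely many cells in each dimension. Second, and this is the point where the information attributed to Kropholler enters, a soluble group of type FP$_\infty$ has finite virtual cohomological dimension by Theorem~\ref{th:14}, so in particular $G$ is virtually torsion-free of finite rank; we will use this to get hold of the rational-set structure.

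The main obstacle is establishing that $Rat(G)$ is a boolean algebra, i.e.\ that the class of rational subsets is closed under complementation (equivalently, under intersection). For automatic groups in general this is false, so we must exploit biautomaticity. Here is the key step: in a biautomatic group, finitely generated subgroups are rational and, more importantly, the biautomatic structure restricts to biautomatic (hence rational) structures on suitable subgroups, and biautomatic groups are closed under the operations needed so that the languages recognising products and translates interact well. Concretely, I would invoke the Gersten--Short results cited in the introduction: in a biautomatic group every polycyclic subgroup is virtually abelian, and for \emph{linear} biautomatic groups every soluble subgroup is finitely generated and virtually abelian. Since a soluble group of finite rank that is virtually torsion-free is virtually a minimax group and, by the Robinson--Za\v{\i}cev analysis already used in the proof of Theorem~\ref{th:15}, is nilpotent-by-(virtually abelian) and in particular linear over a number field, the Gersten--Short linear-case theorem applies directly to $G$ itself once we know $G$ has finite rank.

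So the cleanest route is: (1) a biautomatic group is of type FP$_\infty$; (2) by Theorem~\ref{th:14} and Lemma~\ref{le:14}, the soluble group $G$ has a finite-index subgroup $H$ of finite cohomological dimension, hence $H$ is torsion-free of finite Pr\"ufer rank (Kargapolov's theorem, exactly as in the proof of Theorem~\ref{th:15}); (3) a finitely generated soluble group of finite rank is minimax and embeds in a linear group over $\mathbb{Q}$ (Robinson--Za\v{\i}cev, cf.\ \cite{LR}); (4) $H$, being a finitely generated subgroup of the linear biautomatic group $G$, is itself biautomatic, so by the Gersten--Short theorem every soluble subgroup of $H$ is finitely generated virtually abelian --- in particular $H$ is virtually abelian; (5) therefore $G$ is virtually abelian. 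The role of FP$_\infty$ is precisely to force finite rank, which is what upgrades the abstract biautomatic hypothesis to the linear situation where Gersten--Short applies; alternatively one can feed finite rank plus the boolean-algebra property (which also follows, since in a group of finite rank all subgroups are finitely generated and the Howson-type arguments of Theorems~\ref{th:13} and~\ref{th:15} go through) into Theorem~\ref{th:15} and conclude. I expect step (4) --- getting from ``$G$ biautomatic'' to ``$H$ biautomatic'' and then applying the linear-case structure theorem --- to be the delicate one, since it relies on the subgroup theory of biautomatic groups rather than on soft closure properties.
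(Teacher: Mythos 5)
The first half of your plan coincides with the paper's: biautomatic $\Rightarrow$ FP$_\infty$ (ECHLPT), then Theorem~\ref{th:14} and Lemma~\ref{le:14} give a finite-index torsion-free subgroup $H$ of finite cohomological dimension, Kargapolov gives finite rank, Robinson--Za\v{\i}cev gives minimax, and one gets $H$ nilpotent-by-(virtually abelian). But both of your proposed ways to finish have genuine gaps. The reduction to Theorem~\ref{th:15} is never established: you give no argument that biautomaticity makes $Rat(G)$ closed under complementation, and your parenthetical justification --- ``in a group of finite rank all subgroups are finitely generated'' --- is false (Pr\"ufer rank only bounds the number of generators of the \emph{finitely generated} subgroups; $\mathbb{Z}[1/2]$ sits inside the finite-rank soluble minimax group $\mathrm{gp}(a,t\,|\,t^{-1}at=a^2)$), and even if all subgroups were finitely generated this would say nothing about complements of arbitrary rational sets. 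The paper makes no such claim: its proof of Theorem~\ref{th:16} does not use Theorem~\ref{th:15} or rational sets at all.

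The linear route also has a hole at its crucial step. ``Nilpotent-by-(virtually abelian), and in particular linear over a number field'' is a non sequitur: finitely generated nilpotent-by-abelian groups need not be linear (P.~Hall's centre-by-metabelian groups are not residually finite, while finitely generated linear groups are). What you actually need is a nontrivial theorem that a finitely generated torsion-free soluble minimax group embeds in $GL_n(\mathbb{Q})$; Robinson--Za\v{\i}cev gives minimax, not linearity, and you neither prove nor correctly cite such a result. Your step (4) moreover rests on the false principle that finitely generated subgroups of biautomatic groups are biautomatic --- only finite-index subgroups are, by \cite{GS} (though if $G$ itself were known to be linear you could apply the Gersten--Short linear theorem to $G$ directly and skip $H$). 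For comparison, the paper avoids both linearity and rational sets: it shows the nilpotent normal subgroup $N\lhd H$ is abelian, since otherwise $H$ would contain a copy of the Heisenberg group UT$_3(\mathbb{Z})$, contradicting Gersten--Short's theorem that polycyclic subgroups of biautomatic groups are virtually abelian; then $H$, being biautomatic (finite index) and abelian-by-(virtually abelian), satisfies the minimal condition on centralizers, hence by \cite{GS} the maximal condition on abelian subgroups, hence is polycyclic by Mal'cev's theorem, and hence virtually abelian by \cite{GS} again. If you want to salvage your route you must either supply the $\mathbb{Q}$-linearity theorem with a proper reference or replace it by an argument of this purely biautomatic kind.
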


\begin{proof}
By \cite{ECHLPT}, theorem 10.2.6,  every soluble biautomatic group has type FP$_{\infty}.$ Hence by theorem \ref{th:14}  $G$ has a subgroup of finite index  $H$  with cd$(H) < \infty .$ By lemma \ref{le:14}  $H$ is torsion-free. Moreover, all abelian subgroups of $H$ has  bounded rank. Hence by Kargapolov's theorem (see \cite{KM}  or  \cite{LR})  $H$ has finite rank. By  Robinson-Za\v{i}cev theorem    (see  \cite{LR}), every finitely generated soluble group with finite rank is a minimax group.   Also we know (see \cite{LR}), that every soluble torsion-free minimax is nilpotent-by-(virtually abelian). Let  $N$  be a nilpotent normal subgroup of   $H$  such that  $H/N$ is virtually abelian.  

We will prove that   $H$ is abelian. Let $1 \leq \zeta_1(N) \leq \zeta_2(N) \leq ... $ be the upper central series in $N.$  Let $u \in N, y \in \zeta_2(N)\setminus \zeta_1(N)$ don't commute. Then $[u, y] \in \zeta_1(H)$ is a nontrivial element of infinite order. 
 Then  the subgroup gp$(g,y)$ is isomorphic to the free nilpotent of rank $2$ and class $2$ group UT$_3(\mathbb{Z}).$   It cannot happen since UT$_3(\mathbb{Z})$ is polycyclic but not virtually abelian. Indeed, by \cite{GS} every polycyclic subgroup of biautomatic group is virtually abelian.  
Thus, $\zeta_2(H) = \zeta_1(H), $ and $H =\zeta_1(H)$ is abelian.   

The group  $H$ is finitely generated and virtually metabelian.  Every subgroup of a finite index in a biautomatic group is biautomatic  \cite{GS}. Thus,  $H$ is biautomatic. Then $H$ satisfies to the minimal condition for centralizers  (see \cite{R} or \cite{LR}).  It means: if   
 ${\cal\bf  S}(H)$ be a class of subgroups of  $H$ of type  $M = C_H(X) = \{h \in H: \forall x \in X \  [h,x] = 1\},$  where  $X \subseteq H,$ then descending sequence   $M_1 \geq M_2 \geq ... \geq M_l ... $ of subgroups in  ${\cal\bf S}(H)$ stabilizes on a finite step. Then there is a number   $l$ such that   $M_l = M_{l+1} = ... .$ By \cite{GS} each biautomatic group with this property satisfies to the maximal condition  on abelian subgroups. By Mal'cev's theorem  (see \cite{KM}) a soluble group with this condition is polycyclic. Hence   $H$ is polycyclic. Then by  \cite{GS} $H$ is virtually abelian. Hence $G$ is virtually abelian. 
  
\end{proof}

\bigskip
{\large Acknowledgment.} The author was supported in part by Russian Research Fund, grant №14-11-00085.

\end{document}